\newcommand{\comment}[1]{}
\newtheorem{lem}{Lemma}
\newtheorem{propn}{Proposition}
\newtheorem{cor}{Corollary}
\newtheorem{thm}{Theorem}
\theoremstyle{remark}
\theoremstyle{definition}
\newtheorem{defn}{Definition}
\newcommand{\R}{\mathbb R}
\newcommand{\Z}{\mathbb Z}
\newcommand{\T}{\mathbb T}
\newcommand{\N}{\mathbb N}
\newcommand{\C}{\mathbb C}
\newcommand{\MM}{\mathcal M}
\newcommand{\NN}{\mathcal N}
\DeclareMathOperator{\lcm}{lcm}
\newcommand{\vp}{\varphi}
\newcommand{\D}{\delta}
\newcommand{\de}{\delta}
\newcommand{\si}{\sigma}
\newcommand{\VE}{\varepsilon}
\newcommand{\A}{\alpha}
\newcommand{\B}{\beta}
\newcommand{\lm}{\lambda}
\newcommand{\Lm}{\Lambda}
\newcommand{\be}{\begin{equation}}
\newcommand{\ee}{\end{equation}}
\newcommand{\bee}{\begin{equation*}}
\newcommand{\eee}{\end{equation*}}
\begin{document}
\title{Distances and Trees in Dense subsets of $\mathbb{Z}^d$}
\author{Neil Lyall\quad\quad\quad\'Akos Magyar}
\thanks{The first and second authors were partially supported by grants NSF-DMS 1702411 and NSF-DMS 1600840, respectively.}

\address{Department of Mathematics, The University of Georgia, Athens, GA 30602, USA}
\email{lyall@math.uga.edu}
\email{magyar@math.uga.edu}

\subjclass[2000]{11B30}

\begin{abstract}
In \cite{FKW} Katznelson and Weiss establish that all sufficiently large distances can always be attained between pairs of points from any given measurable subset of $\mathbb{R}^2$ of positive upper (Banach) density. A second proof of this result, as well as a stronger ``pinned variant'', was given by Bourgain in \cite{B} using Fourier analytic methods.
In \cite{M1} the second author adapted Bourgain's  Fourier analytic approach to established a result analogous to that of Katznelson and Weiss for subsets $\mathbb{Z}^d$ provided $d\geq 5$. 
We present a new direct proof of this discrete distance set result and generalize this to arbitrary trees. Using appropriate discrete spherical maximal function theorems we ultimately establish the natural ``pinned variants'' of these results.
\end{abstract}
\maketitle

\setlength{\parskip}{5pt}


\section{Introduction}



\subsection{Distance sets and existing results}

A result of Katznelson and Weiss \cite{FKW} states that all sufficiently large distances can  always be attained between pairs of points from any given measurable subset of $\mathbb{R}^2$ of positive upper (Banach) density. 
Specifically, if $A$ is a measurable subset of $\R^2$ of positive upper Banach density, they established the existence of a threshold $\lm_0=\lm_0(A)$ such that the distance set
\[\text{dist}(A)=\{|x-y|\,:\, x,y\in A\}\supseteq[\lm_0,\infty).\]

Recall that the \emph{upper Banach density} $\D^*(A)$ of a set $A\subseteq\R^d$ is defined by \[\D^*(A):=\lim_{N\rightarrow\infty}\sup_{t\in\R^d}\frac{|A\cap(t+Q_N)|}{|Q_N|},\]
where $|\cdot|$ denotes Lebesgue measure on $\R^d$ and $Q_N$ denotes the cube $[-N/2,N/2]^d$.

This result was later established using Fourier analytic methods by Bourgain in \cite{B}.  Bourgain also established a ``pinned variant'', namely that for any $\lm_1\geq\lm_0$ there is a fixed $x\in A$ such that
\[\text{dist}(A;x)=\{|x-y|\,:\, y\in A\}\supseteq[\lm_0,\lm_1].\]

In \cite{M1} the second author adapted Bourgain's  Fourier analytic approach to established a result analogous to that of Katznelson and Weiss for subsets $\mathbb{Z}^d$, namely that if $A\subseteq \Z^d$  of positive upper Banach density and $d\geq5$, then there exists $\lm_0=\lm_0(A)$ and an integer $q$, depending  on $d$ and the density of $A$, such that 
\[\text{dist}^2(A)=\{|x-y|^2\,:\, x,y\in A\}\supseteq[\lm_0,\infty)\cap q^2\Z.\] 

Recall that the \emph{upper Banach density} $\D^*(A)$ of a set $A\subseteq\Z^d$ is analogously defined by \[\D^*(A):=\lim_{N\rightarrow\infty}\sup_{t\in\Z^d}\frac{|A\cap(t+Q_N)|}{|Q_N|},\]
where, $|\cdot|$ now denotes counting measure on $\Z^d$ and $Q_N$  the discrete cube $[-N/2,N/2]^d\cap\Z^d$.

Note that since $A$ could fall entirely into a fixed congruence class of some integer $1\leq r\leq\delta^*(A)^{-1/d}$  the value of $q$ in the result above must be divisible by the  least common multiple of all integers $1\leq r\leq \delta^*(A)^{-1/d}$.


\subsection{New results}
We will denote, for any integer $\lm$, the discrete sphere of radius $\sqrt{\lm}$ by $S_\lm$, namely
\[S_{\lambda}:=\{x\in\mathbb{R}^d\,:\, |x|^2=\lambda\}\cap\mathbb{Z}^d.\]

In this paper we will present a  new direct  proof of the following discrete distance set result from \cite{M1}.

\begin{thm}[Unpinned Distances]\label{unpinned}
Let $A\subseteq\mathbb{Z}^d$ with $d\geq5$ and $\D^*(A)>0$.

There exist  $q=q(\D^*(A))$ and $\lm_0=\lm_0(A)$ such that for any integer  $\lambda\geq \Lm_0$ there exist a pair of points 
\bee\{x,x+x_1\}\subseteq A\quad\text{with}\quad |x_1|^2=q^2\lm.\eee
In fact, for any $\VE>0$ there exist $q=q(\VE,d)$ and $\Lm_0=\Lm_0(A,\VE)$ such that for any integer $\lambda\geq \Lm_0$ one has 
\bee
\frac{|A\cap(x+qS_{\lambda})|}{|S_\lm|}>\D^*(A)-\VE  \quad\text{for some $x\in A$}.\eee
\end{thm}


By considering sets $A$ of the form $\bigcup_{s\in\{1,\dots,q\}^d}A_s$ with each set $A_s$ a ``random" subset of the congruence class $s+(q\Z)^d$ one can easily see that the second conclusion above is best possible or  ``$\VE$-optimal".

\smallskip

The first main new result of this paper is  the following  ``pinned variant" of Theorem \ref{unpinned} above, in other words a discrete analogue of Bourgain's pinned distances theorem in \cite{B}.

\begin{thm}[Pinned Distances]\label{pinned}
Let $\VE>0$ and  $A\subseteq\mathbb{Z}^d$ with $d\geq5$.


There exist $q=q(\VE,d)$ and $\Lm_0=\Lm_0(A,\VE)$ such that for any $\Lm_1\geq \Lm_0$ there exists a fixed $x\in A$ such that
\bee
\frac{|A\cap(x+qS_{\lambda})|}{|S_\lm|}>\D^*(A)-\VE\quad\text{for all integers} \quad \Lm_0\leq\lm\leq\Lm_1.\eee
\end{thm}




\medskip

Our approach to Theorems \ref{unpinned} and \ref{pinned}  allows us to establish generalizations  to arbitrary finite trees.

\smallskip

\begin{defn}[Trees and Down-Labeled Trees]
A \emph{tree} $\Gamma=\Gamma(V,E)$ is a  connected acyclic graph.
It is easy to verify that the vertex set $V$ and edge set $E$ of any given finite tree $\Gamma=\Gamma(V,E)$ must satisfy $|E|=|V|-1$ and that there exists
an enumeration  of $V=\{v_0,v_1,\dots,v_n\}$ so that 
each edge $e_j$ from  $E=\{e_1,\dots,e_n\}$ takes the form $e_j=\{v_i,v_j\}$ for some unique $0\leq i< j$.  
We shall refer to a tree $\Gamma$ on $n+1$ vertices as \emph{down-labeled} if its vertex set $\{v_0,v_1,\dots,v_n\}$ has been enumerated as described above and use, for each $1\leq j\leq n$, the convenient notation $i_\Gamma(j)$ to denote the unique $0\leq i< j$ for which $\{v_i,v_j\}\in E$.
\end{defn}

\smallskip

\begin{figure}[htbp] 
\centering
\includegraphics[width=2.5in]{tree}
\caption{A tree}
\label{}
\end{figure}





The following result  gives the aforementioned  generalizations of Theorem \ref{unpinned} to arbitrary finite trees.


\begin{thm}[Unpinned Trees]\label{unpinnedTree}

Let $\Gamma$  be a down-labeled tree on $n+1$ vertices.

If $A\subseteq\mathbb{Z}^d$ with $d\geq5$ and $\D^*(A)>0$, then there exist  $q=q(\D^*(A))$ and $\Lm_0=\Lm_0(A,n)$ such that for any integers $\lambda_1,\dots,\lm_n\geq \Lm_0$ there exists  
\bee
\{x+x_0,x+x_1,\dots,x+x_n\}\subseteq A \text{ \ with  $x_0=0$ and $|x_j-x_{i_\Gamma(j)}|^2=q^2\lm_j$ for all $1\leq j\leq n$.}\eee
In fact, for any $\VE>0$
there exist $q=q(\VE,d)$ and $\Lm_0=\Lm_0(A,n,\VE)$ such that for any integers $\lambda_1,\dots,\lm_n\geq \Lm_0$  
\bee
\frac{\left|\left\{\{x_1,\dots,x_n\}\subseteq A-x\,:\, \text{$x_j-x_{i_\Gamma(j)}\in qS_{\lm_j}$ for $1\leq j\leq n$}\right\}\right|}{|S_{\lm_1}|\cdots|S_{\lm_n}|}>\D^*(A)^n-\VE 
\eee
for some $x\in A$, with the understanding that $x_0=0$.
\end{thm}

We remark that no result of this type can possibly hold with a threshold $\Lm_0$ independent of $n$ and that a quantitatively weaker (and non-optimal) version of Theorem \ref{unpinnedTree}, in which the parameter $q$ also depended on $n$,  was previously established by Bulinski in \cite{BU} using methods from Ergodic theory.

\smallskip

Our main result, which we stress does not follow from the arguments presented in \cite{BU},  is the following ``pinned variant" of Theorem \ref{unpinnedTree} which generalizes Theorem \ref{pinned} above.

\begin{thm}[Pinned Trees]\label{pinnedTree}
Let $\Gamma$  be a down-labeled tree on $n+1$ vertices, $\VE>0$, and $A\subseteq\mathbb{Z}^d$ with $d\geq5$.

There exist $q=q(\VE,d)$ and $\Lm_0=\Lm_0(A,\VE)$
 such that for any $\Lm_1\geq \Lm_0$ there exists a fixed $x\in A$ such that
\bee
\frac{\left|\left\{\{x_1,\dots,x_n\}\subseteq A-x\,:\, \text{$x_j-x_{i_\Gamma(j)}\in qS_{\lm_j}$ for $1\leq j\leq n$}\right\}\right|}{|S_{\lm_1}|\cdots|S_{\lm_n}|}>\D^*(A)^n-\VE\eee
for any choice of integers $\lambda_1,\dots,\lm_n$ that satisfy  \ $\Lm_0\leq\lambda_1,\dots,\lm_n\leq\Lm_1$, with the understanding that $x_0=0$.
\end{thm}

\smallskip

\subsection{Outline of paper}\

In Section \ref{Sect2}  we state analogues of Theorems \ref{unpinned}-\ref{pinnedTree} for uniformly distributed subsets of $\Z^d$ and reduce their proofs to that of analogous results for uniformly distributed compact subsets of $\Z^d$. 

In Section \ref{P0Proof} we complete the proofs of Theorems \ref{unpinned} and \ref{unpinnedTree} by proving the analogous result for uniformly distributed compact subsets of $\Z^d$, namely Proposition \ref{Propn0}. To do this we
introduce a norm which measures the uniformity of distribution within residue classes modulo $q$ with respect to a scale $L$. We then prove that this norm controls the frequency with which certain distances appear  in compact subset of $\Z^d$, this is analogous to the so-called von-Neumann type inequalities in additive combinatorics. We ultimately demonstrate that this control on the frequency with which certain distances appear allows  us to actually control the frequency with which trees with certain prescribed edge lengths appear in compact subset of $\Z^d$. 

In Sections \ref{P1Proof} and \ref{MollyProof} we complete the proofs of Theorems \ref{pinned} and \ref{pinnedTree} by proving the analogous result for uniformly distributed compact subsets of $\Z^d$, namely Proposition \ref{Propn1}. In Section \ref{P1Proof} we reduce matters to the \emph{Discrete Spherical Maximal Function Theorem} of Magyar, Stein  and Wainger \cite{MSW} and a closely related ``mollified variant" thereof, namely Proposition \ref{mollified}, whose  statement and proof  we  presented in Section \ref{MollyProof}.

\smallskip

\section{Reduction to Uniformly Distributed Compact Subsets of $\Z^d$}\label{Sect2}

\subsection{Distances and Trees in Uniformly Distributed Subsets of $\Z^d$}


\begin{defn}[Definition of $q_\eta$ and $\eta$-uniform distribution] For any $\eta>0$ we define
\bee
q_\eta:=\lcm\{1\leq q\leq C\eta^{-2}\}\eee
with $C>0$ a (sufficiently) large absolute constant
and $A\subseteq\Z^d$  to be \emph{$\eta$-uniformly distributed (modulo $q_\eta$)}
 if its \emph{relative} upper Banach density on any residue class modulo $q_\eta$ never exceeds $(1+\eta^2)$ times its density on $\Z^d$, namely if for all $s\in\{1,\dots,q_\eta\}^d$ one has
\[\D^*(A\,|\,s+(q_\eta \Z)^d)\leq(1+\eta^2)\,\D^*(A).\]
\end{defn}

Theorems \ref{unpinned} and \ref{unpinnedTree} are immediate consequences, via an easy density increment argument, of the following analogous result for uniformly distributed sets.

\begin{thm}[Theorems \ref{unpinned} and \ref{unpinnedTree} for Uniformly Distributed Sets]\label{unpinnedUD}\

Let $\VE>0$ and $A\subseteq \Z^d$ with $d\geq 5$ be $\eta$-uniformly distributed for some $\eta>0$.

\begin{itemize}
\item[\text{(i)}]
If $0<\eta\ll\VE^2$, then there exist $\Lm_0=\Lm_0(A,\eta)$ such that for any integer $\lambda\geq \Lm_0$ one has 
\bee
\frac{|A\cap(x+S_{\lambda})|}{|S_\lm|}
>\D^*(A)-\VE \quad\text{for some $x\in A$}\eee


\item[\text{(ii)}]
If $0<\eta\ll\VE^2/n$, then for any  down-labeled tree $\Gamma$ on $n+1$ vertices there exist $\Lm_0=\Lm_0(A,n,\eta)$ such that for any integers  $\lambda_1,\dots,\lm_n\geq \Lm_0$  there exist $x\in A$ for which
\bee
\frac{\left|\left\{\{x_1,\dots,x_n\}\subseteq A-x\,:\, \text{$|x_j-x_{i_\Gamma(j)}|^2=\lm_j$ for $1\leq j\leq n$}\right\}\right|}{|S_{\lm_1}|\cdots|S_{\lm_n}|}>\D^*(A)^n-\VE\eee
with the understanding that $x_0=0$.
\end{itemize}
\end{thm}

In Theorem \ref{unpinnedUD} above, and throughout the paper, we use the notation $\A\ll \B$ to denote that $\A\leq c\B$ for some suitably small constant $c>0$.


Theorems \ref{pinned} and \ref{pinnedTree} likewise reduce to the following analogous result for uniformly distributed sets.

\begin{thm}[Theorems \ref{pinned} and \ref{pinnedTree} for Uniformly Distributed Sets]\label{pinnedUD}\

Let $\VE>0$, $0<\eta\ll\VE^3$, and $A\subseteq \Z^d$ with $d\geq 5$ be $\eta$-uniformly distributed.

\begin{itemize}
\item[\text{(i)}]
There exist $\Lm_0=\Lm_0(A,\eta)$ such that for any $\Lm_1\geq \Lm_0$ there exists a fixed $x\in A$ such that
\bee
\frac{|A\cap(x+S_{\lambda})|}{|S_\lm|}
>\D^*(A)-\VE\quad\text{for all integers} \quad \Lm_0\leq\lm\leq\Lm_1.\eee


\item[\text{(ii)}]
In fact, for any  down-labeled tree $\Gamma$ on $n+1$ vertices there exist $\Lm_0=\Lm_0(A,\eta)$ such that for any  $\Lambda_1\geq \Lm_0$ there exists a fixed $x\in A$ such that
\bee
\frac{\left|\left\{\{x_1,\dots,x_n\}\subseteq A-x\,:\, \text{$|x_j-x_{i_\Gamma(j)}|^2=\lm_j$ for $1\leq j\leq n$}\right\}\right|}{|S_{\lm_1}|\cdots|S_{\lm_n}|}>\D^*(A)^n-\VE\eee
for all integers $\lambda_1,\dots,\lm_n$ that satisfy  \ $\Lm_0\leq\lambda_1,\dots,\lm_n\leq\Lm_1$, with the understanding that $x_0=0$.
\end{itemize}
\end{thm}



\subsection{Compact variants of Theorems \ref{unpinnedUD} and \ref{pinnedUD}}

We shall now show that
Theorems \ref{unpinnedUD}  and \ref{pinnedUD} can in turn can be directly deduced from analogous \emph{compact} variants, namely Corollary \ref{Cor0} and Proposition \ref{Propn1} below. 

In what follows we shall  use $1_B$ to denote the characteristic function of any $B\subseteq\Z^d$ and define
\[\sigma_{\lm}=|S_{\lm}|^{-1}1_{S_{\lm}}.\]


First we introduce a second related notion of uniformity.

\begin{defn}[Definition of $(\eta,L)$-uniform distribution] 
Let $\eta>0$ and $q_\eta\ll \eta^2 L\ll \eta^4N.$ 

We define
$A\subseteq Q_N$ to be \emph{$(\eta,L)$-uniformly distributed} if \[\frac{1}{|Q_N|}\sum_{t\in Q_N}\left|\frac{|A\cap (t+Q_{q_\eta,L})|}{|Q_{q_\eta,L}|}-\frac{|A|}{|Q_N|}\right|^2\leq \eta^2,\]
where as before $Q_N$ denotes the discrete cube $[-N/2,N/2]^d\cap\Z^d$ and now
$Q_{q,L}:=Q_L\cap (q \Z)^d$.
\end{defn}

\smallskip

\begin{propn}[Average count of distances and trees in uniformly distributed subsets of $Q_N$]\label{Propn0}\

Let $\Gamma$  be a down-labeled tree on $n+1$ vertices.
If $\eta>0$ and $A\subseteq Q_N\subseteq \Z^d$ with $d\geq5$ 
 is
$(\eta,L)$-uniformly distributed, then 
for all integers $\lambda_1,\dots,\lm_n$ that satisfy $\eta^{-4}L^2\leq \lambda_1,\dots,\lm_n\leq \eta^{4} N^2$ one has
\bee
\frac{1}{|Q_N|}\sum_{x\in\Z^d}1_A(x)\!\!\!\!\sum_{x_1,\dots,x_n\in\Z^d}\prod_{j=1}^n1_A(x-x_j)\,\sigma_{\lm_j}(x_j-x_{i_\Gamma(j)})=\left(\frac{|A|}{|Q_N|}\right)^{n+1} + \ O(n\,\eta)
\eee
 with the understanding that $x_0=0$. 
 
 In particular, by taking $n=1$ above, one obtains that
\bee
\frac{1}{|Q_N|}\sum_{x\in\Z^d}1_A(x)\sum_{x_1\in\Z^d}1_A(x-x_1)\,\sigma_{\lm}(x_1)=\left(\frac{|A|}{|Q_N|}\right)^2 +O(\eta)
\eee
for all integers $\lambda$ that satisfy $\eta^{-4}L^2\leq \lambda\leq \eta^{4} N^2$.
\end{propn}

It is easy to see that Proposition \ref{Propn0} immediately implies the following

\begin{cor}[Unpinned distances and trees in uniformly distributed subsets of $Q_N$]\label{Cor0}\

Let $\VE>0$ and $A\subseteq Q_N\subseteq \Z^d$ with $d\geq5$ 
be
$(\eta,L)$-uniformly distributed for some $\eta>0$.

\begin{itemize}
\item[\text{(i)}]
If $0<\eta\ll\VE^2$, then for all integers $\lm$ that satisfy $\eta^{-4}L^2\leq \lm\leq \eta^{4} N^2$
there exists $x\in A$ such that
\[\frac{|A\cap(x+S_{\lambda})|}{|S_\lm|}=\sum_{x_1\in\Z^d}1_A(x-x_1)\,\sigma_{\lm}(x_1)>\frac{|A|}{|Q_N|}-\VE.\]


\item[\text{(ii)}]
If $0<\eta\ll\VE^2/n$, then for any  down-labeled tree $\Gamma$ on $n+1$ vertices and
 integers $\lambda_1,\dots,\lm_n$ that satisfy $\eta^{-4}L^2\leq \lambda_1,\dots,\lm_n\leq \eta^{4} N^2$ there exist $x\in A$ for which
\bee
\sum_{x_1,\dots,x_n\in\Z^d}\prod_{j=1}^n1_A(x-x_j)\,\sigma_{\lm_j}(x_j-x_{i_\Gamma(j)})>\left(\frac{|A|}{|Q_N|}\right)^{n}-\VE
\eee
with the understanding that $x_0=0$.
\end{itemize}
\end{cor}


We will ultimately also establish the following ``pinned" variant of Corollary \ref{Cor0}.

\begin{propn}[Pinned distances and trees in uniformly distributed subsets of $Q_N$]\label{Propn1}\

Let $\VE>0$, $0<\eta\ll\VE^3$, and $A\subseteq Q_N\subseteq \Z^d$ with $d\geq5$ 
be
$(\eta,L)$-uniformly distributed.

\begin{itemize}
\item[\text{(i)}]
There exists $x\in A$ such that
\[\frac{|A\cap(x+S_{\lambda})|}{|S_\lm|}>\frac{|A|}{|Q_N|}-\VE\]
for all integers $\lm$ that satisfy $\eta^{-4}L^2\leq \lm\leq \eta^{4} N^2$.

\smallskip

\item[\text{(ii)}]
In fact, for any  down-labeled tree $\Gamma$ on $n+1$ vertices there exist $x\in A$ such that
\bee
\sum_{x_1,\dots,x_n\in\Z^d}\prod_{j=1}^n1_A(x-x_j)\,\sigma_{\lm_j}(x_j-x_{i_\Gamma(j)})>\left(\frac{|A|}{|Q_N|}\right)^{n}-\VE
\eee
for all integers $\lambda_1,\dots,\lm_n$ satisfying $\eta^{-4}L^2\leq \lambda_1,\dots,\lm_n\leq \eta^{4} N^2$, with the understanding that $x_0=0$.
\end{itemize}
\end{propn}

\subsection{Reduction of Theorems \ref{unpinnedUD} and \ref{pinnedUD} to Corollary \ref{Cor0} and Proposition \ref{Propn1}}\

The task of deducing Theorems \ref{unpinnedUD} and \ref{pinnedUD} from Corollary \ref{Cor0} and Proposition \ref{Propn1} respectively simply 
amounts to establishing the following precise relationship between our two notions of uniform distribution.

\begin{lem}\label{udlem}
Let $\eta>0$. If $A\subseteq \Z^d$ with $\D^*(A)>0$ is $\eta$-uniformly distributed, then there exists a positive integer $L=L(A,\eta)$ and an arbitrarily large integer $N$ with  $N\geq\eta^{-4}L$  such that the set
$(A-t_0)\cap Q_N$ satisfies
\bee
\frac{|(A-t_0)\cap Q_N|}{|Q_N|}>\de^\ast (A)\eee
for some   $t_0\in \Z^d$ and simultaneously has the property that it
is $(C\eta,L)$-uniformly distributed for some $C>0$.

\end{lem}

\begin{proof}
Since $A\subseteq\Z^d$ is $\eta$-uniformly distributed we know there exists a positive integer  $L=L(A,\eta)$ such that 
\be\label{i}\frac{|A\cap (t+Q_{q_\eta,L})|}{|Q_{q_\eta,L}|}\leq (1+\eta^4/3)\,\de^\ast (A)\ee
for all $t\in\Z^d$.  Since $\D^*(A)>0$  we further know that there exist arbitrarily large $N\in\N$ such that
\be\label{ii}\frac{|A\cap(t_0+Q_N)|}{|Q_N|}\geq (1-\eta^4/3)\,\de^\ast (A)\ee
for some $t_0\in\Z^d$.
Combining (\ref{i}) and (\ref{ii}) we see  there exist $N\in\N$ with $N\geq\eta^{-4}L$ and $t_0\in\Z^d$ such that
\bee
\frac{|A\cap (t+Q_{q_\eta,L})|}{|Q_{q_\eta,L}|}\leq(1+\eta^4)\frac{|A\cap(t_0+Q_N)|}{|Q_N|}
\eee
for all $t\in\Z^d$. 
Setting $A':=(A-t_0)\cap Q_N$ we further note that since $A'\cap(t+Q_{q_\eta,L})$ is only supported in $Q_N+Q_L$ it follows that
\bee\label{Q-QL}
|A'|=\sum_{t\in\Z^d}\frac{|A'\cap(t+Q_{q_\eta,L})|}{|Q_{q_\eta,L}|}=\sum_{t\in Q_N}\frac{|A'\cap(t+Q_{q_\eta,L})|}{|Q_{q_\eta,L}|}+O(L/N),
\eee
from which one can easily deduce that 
\bee\label{most t's}
\frac{1}{|Q_N|}\Bigl|\Bigl\{t\in Q_N\,:\, \frac{|A'\cap(t+Q_{q_\eta,L})|}{|Q_{q_\eta,L}|}\leq (1-\eta^2)\,\frac{|A'|}{|Q_N|}\Bigr\}\Bigr|=O(\eta^2)
\eee
provided $L/N\ll\eta^2$ and hence that $A'$ is $(C\eta,L)$-uniformly distributed for some $C>0$. 
\end{proof}

We are thus left with  proving Propositions  \ref{Propn0} and \ref{Propn1}. These proofs are presented in  Sections \ref{P0Proof} and \ref{P1Proof} below. 


\section{Proof of Proposition \ref{Propn0}}\label{P0Proof}

\subsection{Reduction to a Generalized von-Neumann Inequality}\

Let $Q_N$ denote the discrete cube $[-N/2,N/2]^d\cap\Z^d$ with $d\geq5$.

\begin{defn}[Counting Function for Distances and Trees]\

\begin{itemize}
\item[\text{(i)}]
For $1\ll \lm\ll N^2$ and functions
$f_0,f_{1}:Q_N\to[-1,1]$ we define
\bee
T(f_0,f_1)(\lm)= \frac{1}{|Q_N|}\sum_{x\in\Z^d}f_0(x)\sum_{x_1\in\Z^d}f_1(x-x_1)\,\sigma_{\lm}(x_1).
\eee
\item[\text{(ii)}]
While, for any given down-labeled tree $\Gamma$   on $n+1$ vertices, $1\leq m\leq n$, $1\ll \lm_1,\dots,\lm_m\ll N^2$ and functions
$f_0,f_{1},\dots,f_m:Q_N\to[-1,1]$, we define
\bee
T_{\Gamma,m}(f_0,f_1,\dots,f_m)(\lm_1,\dots,\lm_m)= \frac{1}{|Q_N|}\sum_{x\in\Z^d}f_0(x)\!\!\!\!\sum_{x_1,\dots,x_m\in\Z^d}\prod_{j=1}^m f_j(x-x_j)\,\sigma_{\lm_j}(x_j-x_{i_\Gamma(j)})
\eee
with the understanding that $x_0=0$.
\end{itemize}
\end{defn}


\begin{defn}[$U^1(q,L)$-norm] 
For $1\ll q\ll L\ll N$ and functions $f:Q_N\to\R$ we define
\be\label{norm}
\|f\|_{U^1(q,L)}=\Bigl(\frac{1}{|Q_N|}\sum_{t\in \Z^d}|f*\chi_{q,L}(t)|^2\Bigr)^{1/2}
\ee
where $\chi_{q,L}$ denotes the  normalized characteristic function of the cubes $Q_{q,L}:=Q_L\cap (q \Z)^d$, namely
\be\label{normalizedchi}
\chi_{q,L}(x)=\begin{cases}
\left(\frac{q}{L}\right)^{d}& \ \ \text{if} \ \ x\in (q\Z)^d\,\cap [-\frac{L}{2},\frac{L}{2}]^d\\
 \ 0& \ \ \text{otherwise}
\end{cases}.
\ee
\end{defn}

In (\ref{norm}) above and in the sequel we denote the convolution $f*g$ of two functions $f$ and $g$ by
\[f*g(x):=\sum_{y\in\Z^d}f(x-y)g(y).\]

We note that the $U^1(q,L)$-norm measures the mean square oscillation of a function with respect to cubic grids of size $L$ and gap $q$.
It is a simple observation, that we record precisely below, that sets $A\subseteq Q_N$ that are $(\eta,L)$-uniformly distributed have the property that their ``balance functions" have small $U^1(q_\eta,L)$-norm.

\begin{lem}\label{ud} Let $\eta>0$ and  $1\ll L\ll\eta^2 N$. 

If $A\subseteq Q_N$ is $(\eta,L)$-uniformly distributed, then $\|f_A\|_{U^1(q_\eta,L)}\leq 2\eta$ where  $f_A=1_A-\frac{|A|}{|Q_N|}1_{Q_N}$. 
\end{lem}

In light of Lemma \ref{ud} we see that the  engine that drives our proof of Proposition \ref{Propn0} for $n=1$, and thus our short proof of Theorem \ref{unpinned}, via Part (i) of Corollary \ref{Cor0}, is the following ``generalized von-Neumann inequality''.

\begin{lem}[Generalized von-Neumann]\label{GvN0}
Let $\eta>0$, and $\lm$, $L$, and $N$ be integers with $\eta^{-4}L^2\leq \lm\leq \eta^{4} N^2$.

Given any functions
$f_0,f_1:Q_N\to[-1,1]$ 
on $Q_N\subseteq\Z^d$ with $d\geq5$ we have
\bee
\left|T(f_0,f_1)(\lm)\right|\leq\|f_1\|_{U^1(q_\eta,L)}+O(\eta).
\eee
\end{lem}

In fact, as we shall see below, this result is also sufficient to establish Proposition \ref{Propn0} for any $n$ and hence Theorem \ref{unpinnedTree} via Part (ii) of Corollary \ref{Cor0}.

\comment{
\begin{lem}[Generalized von-Neumann]\label{GvN0}

Let $\Gamma$  be a down-labeled tree on $n+1$ vertices and $1\leq m\leq n$.

For any $\eta>0$, integers $ \lm_1,\dots,\lm_m$ that satisfy $\eta^{-4}L^2\leq \lm_1,\dots,\lm_m\leq \eta^{4} N^2$, and functions
\[f_0,f_{1},\dots,f_m:Q_N\to[-1,1]\]
we have that
\bee
\left|T_{\Gamma,m}(f_0,f_1,\dots,f_m)(\lm_1,\dots,\lm_m)\right|\leq\|f_m\|_{U^1(q_\eta,L)}+O(\eta).
\eee
\end{lem}
}

\begin{proof}[Proof of Proposition \ref{Propn0}]
Let $\eta>0$ and $A\subseteq Q_N\subseteq \Z^d$ with $d\geq5$ 
be
$(\eta,L)$-uniformly distributed.

We let $\A=|A|/|Q_N|$ and note that Lemma \ref{ud} ensures that $ \|f_A\|_{U^1(q_\eta,L)}\leq 2\eta$ where $f_A=1_A-\A1_{Q_N}$. \

\noindent
\emph{Case $n=1$}:
 In this case Proposition \ref{Propn0}  follows immediately from Lemma \ref{GvN0} since
\[T(1_A,1_A)(\lm)=\A \, T(1_A,1_{Q_N})+T(1_A,f_A)(\lm)=\A^2+\|f_A\|_{U^1(q_\eta,L)}+O(\eta)\]
for all integers $\lm$ that satisfy $\eta^{-4}L^2\leq \lm\leq \eta^{4} N^2$. 

\noindent
\emph{Case $n\geq2$}:
We first fix $2\leq m\leq n$ and make two key observations.

Since $0\leq i_\Gamma(j)<j$ and  $f_j:Q_N\to[-1,1]$ for all $1\leq j\leq m$, and no function $\sigma_{\lm_j}$ involves the variable $x$, it follows that
\be\label{DistToTree}
|T_{\Gamma,m}(f_0,f_1,\dots,f_m)(\lm_1,\dots,\lm_m)|\leq \sum_{x_1,\dots,x_{m-1}\in\Z^d}\prod_{j=1}^{m-1}\sigma_{\lm_j}(x_j-x_{i_\Gamma(j)}) \ \left|T(\tau_{x_{i_\Gamma(m)}}f_0,f_m)(\lm_m)\right|
\ee
where $\tau_{x_{i_\Gamma(m)}}f_0(x)=f_0(x+x_{i_\Gamma(m)})$.
We also note that 
\be\label{error}
T_{\Gamma,m}(f_0,f_1,\dots,f_{m-1},1_{Q_N})(\lm_1,\dots,\lm_m)=T_{\Gamma,m-1}(f_0,f_1,\dots,f_{m-1})(\lm_1,\dots,\lm_{m-1})+O(\sqrt{\lm_m}/N).
\ee

It then follows from Lemma \ref{GvN0}, together with observations (\ref{DistToTree}) and (\ref{error}) above, that
\begin{align}T_{\Gamma,n}(1_A,\dots,1_A)(\lm_1,\dots,\lm_n)&=\A^{n}\,T_{\Gamma,1}(1_A,1_{Q_N})(\lm_1)\nonumber \\
&\quad\quad\quad\quad+\sum_{m=1}^n \A^{n-m}\,T_{\Gamma,m}(1_A,\dots,1_A,f_A)(\lm_1,\dots,\lm_m)  +O(n\eta^2)\label{induct} 
\\
&=\A^{n+1}  + O(n\, \|f_A\|_{U^1(q_\eta,L)}) +O(n\,\eta)\nonumber
\end{align}
 for all integers $\lambda_1,\dots,\lm_n$ that satisfy $\eta^{-4}L^2\leq \lambda_1,\dots,\lm_n\leq \eta^{4} N^2$.
\end{proof}

In order to prove Proposition \ref{Propn0} we thus left with the final task of establishing Lemma \ref{GvN0}. 

\subsection{Proof of  Lemma \ref{GvN0}}


For any $f:Q_N\to[-1,1]$ we define its \emph{Fourier transform} $\widehat{f}:\T^d\rightarrow\C$ by
\[\widehat{f}(\xi)= \sum\limits_{x\in\Z^d}f(x)e^{-2\pi i x\cdot\xi}\]
noting that the support assumption on $f$ ensures that the series defining $\widehat{f}$ converges uniformly to a continuous function on the torus $\T^d$, which we will freely identify with the unit cube $[0,1)^d$ in $\R^d$. 

\comment{
Fix $0\leq m\leq n$.  
Since $0\leq i_\Gamma(j)<j$ and  $f_j:Q_N\to[-1,1]$ for all $1\leq j\leq m$, and no function $\sigma_{\lm_j}$ involves the variable $x$, it follows that
\begin{align*}
|T_{\Gamma,m}(f_0,f_1,\dots,f_m)&(\lm_1,\dots,\lm_m)|\leq \\&\sum_{x_1,\dots,x_{m-1}\in\Z^d}\prod_{j=1}^{m-1}\sigma_{\lm_j}(x_j-x_{i_\Gamma(j)}) \ \frac{1}{|Q_N|}\sum_{x\in\Z^d}\left|\sum_{x_m\in\Z^d} f_m(x-x_m)\,\sigma_{\lm_m}(x_m-x_{i_\Gamma(m)})\right|
\end{align*}

It therefore follows, via an application of Cauchy-Schwarz and basic properties of the Fourier transform, that 
\[|T_{\Gamma,m}(f_0,f_1,\dots,f_m)(\lm_1,\dots,\lm_m)|^2\leq
\frac{1}{|Q_N|}\int|\widehat{f_m}(\xi)|^2|\widehat{\sigma_{\lambda_m}}(\xi)|^2\,d\xi\]
where
\be\label{es}\widehat{\sigma_{\lambda}}(\xi):=\frac{1}{|S_{\lambda}|}\sum_{x\in S_{\lambda}}e^{-2\pi i x\cdot\xi}.\ee
}

It is easy to verify, using Cauchy-Schwarz and basic properties of the Fourier transform,  that 
\[\left|T(f_0,f_1)(\lm)\right|^2\leq
\frac{1}{|Q_N|}\int|\widehat{f_1}(\xi)|^2|\widehat{\sigma_{\lambda}}(\xi)|^2\,d\xi\]
where
\be\label{es}\widehat{\sigma_{\lambda}}(\xi):=\frac{1}{|S_{\lambda}|}\sum_{x\in S_{\lambda}}e^{-2\pi i x\cdot\xi}.\ee

It is clear that whenever $|\xi|^2\ll \lm^{-1}$ there can be no cancellation in the exponential sum (\ref{es}), in fact it is easy to verify that the same is also true whenever $\xi$ is \emph{close} to a rational point with \emph{small} denominator.
The following proposition is a precise formulation of the fact that this is the only obstruction to cancellation. 

\begin{propn}[Key exponential sum estimates, Proposition 1 in \cite{M1}]\label{KeyU}
Let $\eta>0$. 
If $\lambda\geq C\eta^{-4}$ and \[\xi\notin 
\bigl(q_\eta^{-1}\mathbb{Z}\bigr)^d+\{\xi\in\mathbb{R}^d\,:\,|\xi|^2\leq\eta^{-1}\lambda^{-1}\},\] 
then \[\Bigl|\frac{1}{|S_{\lambda}|}\sum_{x\in S_{\lambda}}e^{-2\pi i x\cdot\xi}\Bigr|\leq \eta.\] 
\end{propn}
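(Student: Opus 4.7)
The plan is to apply the Hardy--Littlewood circle method. Using orthogonality on $[0,1)$ to pick out $|x|^2=\lambda$ one has
\[
\sum_{x\in S_\lambda} e^{-2\pi i x\cdot\xi}
= \int_0^1 e^{-2\pi i \lambda t}\,F(t,\xi)\,dt,\qquad F(t,\xi):=\prod_{j=1}^d\theta(t,\xi_j),
\]
where $\theta(t,u):=\sum_{n\in\Z} w_0(n/\sqrt\lambda)\,e^{2\pi i(n^2 t-nu)}$ for a fixed smooth cutoff $w_0$ on $\R$ equal to $1$ on $[-1,1]$ and supported in $[-2,2]$. Partition $[0,1)$ into major arcs $\MM_{a,q}:=\{t:|t-a/q|\le\lambda^{-1}\}$ with $\gcd(a,q)=1$ and $q\le Q:=C\eta^{-2}$ (so that $q\mid q_\eta$), together with a complementary minor set $\mathfrak{m}$.

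On $\MM_{a,q}$, substitute $t=a/q+s$ and apply Poisson summation in $n$, separating residue classes $n\equiv r\pmod q$: each factor becomes $q^{-1}\sum_{r=0}^{q-1} e^{2\pi i r^2 a/q}\,I(s,\xi_j-r/q)+O(\lambda^{-N})$, where the Fresnel integral $I(s,u):=\int_\R w_0(y/\sqrt\lambda)\,e^{2\pi i(y^2 s-yu)}\,dy$ satisfies $|I(s,u)|\ll\sqrt\lambda\,(1+\lambda s^2+\lambda u^2)^{-N}$ for any $N\ge 1$ by stationary phase (or direct integration by parts). Expanding the $d$-fold product, invoking the product Gauss sum bound $|G(a,r,q)|\le q^{d/2}$, and integrating in $s$ (noting $|s|\le\lambda^{-1}$ forces $\lambda s^2\le 1$), the total major-arc contribution, after normalization by $|S_\lambda|\asymp\lambda^{d/2-1}$, is dominated by
\[
\sum_{q\le Q}\,q^{-d/2}\sum_{r\in(\Z/q\Z)^d}\prod_{j=1}^d\bigl(1+\lambda|\xi_j-r_j/q|^2\bigr)^{-N}.
\]
For $\xi$ outside the excluded set $(q_\eta^{-1}\Z)^d+\{|\zeta|^2\le\eta^{-1}\lambda^{-1}\}$ every $r/q$ with $q\le Q$ satisfies $\lambda|\xi-r/q|^2>\eta^{-1}$, so at least one coordinate produces a decay factor $\ll\eta^N\le\eta$; summing over $q\le Q$ (absolutely convergent for $d\ge 5$ thanks to the $q^{-d/2}$ gain) then yields a total contribution $\le\eta/2$.

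For the minor arcs, Weyl's inequality gives $|\theta(t,\xi_j)|\ll\lambda^{1/2-\delta}$ for some absolute $\delta>0$ whenever $t\in\mathfrak{m}$ (i.e., $t$ admits no rational approximation $|t-a/q|\le\lambda^{-1}$ with $q\le Q$), hence $|F(t,\xi)|\ll\lambda^{d/2-d\delta}$ and the minor-arc integral is $\ll\lambda^{d/2-d\delta}$; after division by $|S_\lambda|$ this is $\ll\lambda^{1-d\delta}\le\eta/2$ for $\lambda\ge C\eta^{-4}$ and $d\ge 5$. The main technical obstacle is the major-arc bookkeeping: one must calibrate the stationary-phase scale of the Fresnel integral $I$ with both the major-arc width $\lambda^{-1}$ and the denominator cutoff $Q=\eta^{-2}$ so that the excluded neighborhood in the statement is recovered precisely, and so that the Gauss-sum cancellation and Fresnel decay combine to produce exactly the constant $\eta$ (with $d\ge 5$ providing absolute convergence of the singular series). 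This is the quantitative content of the Kloosterman-refinement argument carried out in \cite{M1}.
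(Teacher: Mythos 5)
The paper does not prove Proposition~\ref{KeyU} at all: it is quoted verbatim as Proposition~1 of \cite{M1}, whose proof rests on the Magyar--Stein--Wainger decomposition of $\widehat{\sigma_\lambda}$ from \cite{MSW}. Your major-arc analysis is essentially the right shape and matches that decomposition: Gauss-sum factors of size $q^{-d/2}$ at each rational $\ell/q$, a Fresnel (continuous surface-measure) factor decaying away from $\ell/q$ at scale $\lambda^{-1/2}$, and the observation that every $\ell/q$ with $q\leq C\eta^{-2}$ lies in $(q_\eta^{-1}\Z)^d$, so the hypothesis on $\xi$ forces $\lambda|\xi-\ell/q|^2>\eta^{-1}$ and hence decay. (Two bookkeeping points there: you dropped the sum over $a$, so the $q$-sum is really $\sum_q q^{1-d/2}$, convergent only for $d\geq 5$ --- which is fine but worth saying --- and your major arcs of width $\lambda^{-1}$ are too narrow for the minor-arc complement to be treatable; widening them to $|t-a/q|\leq Q/\lambda$ is what makes the hypothesis $\lambda\geq C\eta^{-4}$, i.e.\ $Q^2\leq\lambda$, appear.)

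The genuine gap is the minor arcs. With $\mathfrak m$ defined as the complement of arcs with $q\leq Q=C\eta^{-2}$, Weyl's inequality does \emph{not} give $|\theta(t,\xi_j)|\ll\lambda^{1/2-\delta}$ for an absolute $\delta>0$: the saving it provides is governed by the denominator cutoff, i.e.\ $|\theta|\ll\lambda^{1/2}(Q^{-1/2}+\lambda^{-1/4})\log^{O(1)}\lambda\approx\lambda^{1/2}\eta$, a gain in $\eta$ per variable, not a power of $\lambda$. Consequently the bound ``$|F|\ll\lambda^{d/2-d\delta}$, integrate over a set of measure $\leq 1$, divide by $|S_\lambda|\asymp\lambda^{d/2-1}$'' produces $\lambda\cdot\eta^d\cdot\log^{O(1)}\lambda$, which diverges with $\lambda$. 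To rescue this one must interpolate against a mean value, e.g.\ $\int_0^1|\theta|^4\ll\lambda\log\lambda$ or $\int_0^1|\theta|^6\ll\lambda^2$; doing so yields $\eta^{d-4}\log^{O(1)}\lambda$ or $\eta^{d-6}$ after normalization, which closes comfortably for $d\geq 7$ but \emph{not} for $d=5,6$ --- precisely the threshold cases the proposition must cover. This is not a cosmetic issue: handling $d=5$ is the entire technical content of \cite{MSW} and requires the Kloosterman refinement (organizing the farey dissection so as to exploit cancellation in the $a$-sum) rather than a Weyl/H\"older argument. You gesture at this in your closing sentence, but the argument you actually wrote down fails exactly where the difficulty lives, so as a proof it is incomplete; as a reduction to \cite{M1}/\cite{MSW} it is fine, which is all the present paper itself does.
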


We now define $\psi_{q_\eta,L}$ indirectly via the identity
\[\widehat{\psi_{q_\eta,L}}(\xi):=\widehat{\chi_{q_\eta,L}}(\xi)^2.\]
Since the definition of $\chi_{q_\eta,L}$ in (\ref{normalizedchi}) above clearly implies that  
\[\text{(i) \ $0\leq\widehat{\psi_{q_\eta,L}}(\xi)\leq1$ for all $\xi\in\T^d$ \quad and \quad
(ii) \ $\widehat{\psi_{q_\eta,L}}(\ell/q_\eta)=1$ for all $\ell\in\Z^d$}\]
it follows that 
\[
0\leq1-\widehat{\psi_{q_\eta,L}}(\xi)\ll L|\xi-\ell/q_\eta|
\]
for all $\xi\in\T^d$ and  $\ell\in\Z^d$. In particular we note that 
\be\label{1-psi}
|1-\widehat{\psi_{q_\eta,L}}(\xi)|\ll\eta \quad\text{if \ $|\xi-\ell/q_\eta|\leq\eta^{-1/2}\lm^{-1/2}$ \  for some $\ell\in\Z^d$.}
\ee
while
 Proposition \ref{KeyU} ensures that
\be\label{sigma}
|\widehat{\sigma_{\lambda}}(\xi)|\leq\eta \quad\text{if \ $|\xi-\ell/q_\eta|>\eta^{-1/2}\lm^{-1/2}$ \  for all $\ell\in\Z^d$}.
\ee

Hence, if we write
\[ |\widehat{\sigma_{\lambda}}(\xi)|^2=|\widehat{\sigma_{\lambda}}(\xi)|^2\widehat{\psi}_{q_\eta,L}(\xi)  +|\widehat{\sigma_{\lambda}}(\xi)|^2(1-\widehat{\psi}_{q_\eta,L}(\xi))\]
\\
use the fact that $|\widehat{\sigma_{\lambda}}(\xi)|\leq1$ for all $\xi\in\T^d$ and appeal to Plancherel we can deduce that
\[\left|T(f_0,f_1)(\lm)\right|^2\leq\frac{1}{|Q_N|}\int|\widehat{f_1}(\xi)|^2\,\widehat{\chi}_{q_\eta,L}(\xi)^2\,d\xi+O(\eta^2)=\|f_1\|_{U^1(q_\eta,L)}^2+O(\eta^2)\]
which completes the proof of Lemma \ref{KeyU}.\qed

\section{Proof of Proposition \ref{Propn1}}\label{P1Proof}

Let $Q_N$ denote the discrete cube $[-N/2,N/2]^d\cap\Z^d$.

\begin{defn}[Discrete Spherical Averages]

Let 
$f:Q_N\to\R$ be any function. 

For any integer $\lm$ with $1\ll \lm\ll N^2$ we define the \emph{discrete spherical average}
\bee
\mathcal{A}_{\lm}(f)(x):=f*\sigma_\lm(x)=\frac{1}{|S_\lm|}\sum_{y\in S_\lm}f(x-y).
\eee
\end{defn}

In Sections \ref{4.1} and \ref{4.2} below we reduce Proposition \ref{Propn1} to
the \emph{Discrete Spherical Maximal Function Theorem} of Magyar, Stein  and Wainger \cite{MSW}, see Proposition \ref{MSW}, and a new ``mollified variant" thereof, namely Proposition \ref{mollified}. The  statement and proof of Proposition \ref{mollified} is  presented in Section \ref{MollyProof}.

\subsection{Proof of Part (i) of Proposition \ref{Propn1}}\label{4.1}\

Let $\VE>0$ and $0<\eta\ll\VE^3$. Suppose, contrary to Part (i) of Proposition \ref{Propn1}, that there  exists a set  $A\subseteq Q_N\subseteq \Z^d$ with $d\geq5$ 
and $\A=|A|/|Q_N|>0$ that it is
$(\eta,L)$-uniformly distributed, but has the property that
for every $x\in A$ there exists an integer $\lm$ with
$\eta^{-4}L^2\leq \lm\leq \eta^{4} N^2$ such that
\[\mathcal{A}_{\lambda}(1_A)(x)=\frac{|A\cap(x+S_{\lambda})|}{|S_\lm|}\leq\A-\VE.\]

It easily follows that for every $x\in A$ there exists an integer $\lm$ with
$\eta^{-4}L^2\leq \lm\leq \eta^{4} N^2$ such that
\bee
\mathcal{A}_{\lambda}(f_A)(x)=-\VE+O(\sqrt{\lm}/N)
\eee
where $f_A=1_A-\A1_{Q_N}$.
Hence for every $x\in A$ we may conclude that
 \be\label{max}
\mathcal{A}_*(f_A)(x)\geq \VE/2
\ee
where  for any function $f:\Z^d\to\R$, $\mathcal{A}_*(f)$ denotes the \emph{discrete spherical maximal function} defined by
\bee
\mathcal{A}_*(f)(x):=\sup_{\eta^{-4}L^2\leq \lm\leq \eta^{4} N^2}\left|\mathcal{A}_{\lambda}(f)(x)\right|.\eee

\begin{propn}[$\ell^2$-Boundedness of the Discrete Spherical Maximal Function \cite{MSW}]\label{MSW}
If $d\geq5$, then
\[\sum_{x\in\Z^d}|\mathcal{A}_*(f)(x)|^2\leq C \sum_{x\in\Z^d}|f(x)|^2.\]
\end{propn}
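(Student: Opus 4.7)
The plan is to reduce the $\ell^2$ boundedness of the discrete spherical maximal function to Stein's $L^2$ boundedness of the continuous spherical maximal function on $\mathbb{R}^d$ (known for $d \geq 3$), using the Hardy--Littlewood circle method as the transference mechanism. For each $\lambda$ I would decompose the Fourier multiplier as
\[
\widehat{\sigma_\lambda}(\xi) \;=\; \sum_{q \geq 1} m_q^\lambda(\xi) \;+\; E_\lambda(\xi),
\]
where $m_q^\lambda$ collects the contribution of the major arcs of denominator exactly $q$ and is essentially a sum $\sum_{(a,q)=1} G(q,a)\,\widetilde{d\sigma}(\lambda^{1/2}(\xi - a/q))$, with $G(q,a)$ the normalised Gauss sum satisfying $|G(q,a)| \lesssim q^{-d/2}$ and $\widetilde{d\sigma}$ the Fourier transform on $\mathbb{R}^d$ of the surface measure on the unit sphere. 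This is precisely the decomposition underlying the asymptotic formula for $r_d(\lambda)$, the number of representations of $\lambda$ as a sum of $d$ squares.

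For each fixed $q$, the operator $T_q^\lambda$ with multiplier $m_q^\lambda$ factors, via a sampling/lifting argument, as a $q$-periodic arithmetic convolution composed with a continuous spherical average at scale $\sqrt{\lambda}$ restricted to $\mathbb{Z}^d$. A transference argument then dominates the $q$-level maximal operator $\sup_\lambda |f \ast T_q^\lambda|$ in $\ell^2(\mathbb{Z}^d)$ by $C\, q^{1-d/2}\|f\|_{\ell^2}$, invoking Stein's continuous spherical maximal theorem applied to an appropriate $q\mathbb{Z}^d$-periodic lift of $f$ to $\mathbb{R}^d$. Summing in $q$ yields a bound of order $\sum_{q \geq 1} q^{1-d/2}\|f\|_{\ell^2}$, which is finite precisely when $d \geq 5$. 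The minor-arc remainder $E_\lambda$ is then dispatched by direct $\ell^2$ methods: using the Weyl/Kloosterman pointwise bound $\|E_\lambda\|_{L^\infty(\mathbb{T}^d)} \lesssim \lambda^{-\kappa}$ for some $\kappa > 1/2$ (valid in the range $d \geq 5$), together with a standard square function argument, one converts the pointwise estimate on each $f \ast E_\lambda$ into an $\ell^2$ bound on $\sup_\lambda |f \ast E_\lambda|$.

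The main obstacle, and the reason the hypothesis $d \geq 5$ is forced, is the joint tension between two dimensional thresholds: the Gauss-sum decay $q^{-d/2}$ must beat the $q^d$ volume growth of reduced fractions modulo $q$ (yielding the summable factor $q^{1-d/2}$ only when $d \geq 5$), and simultaneously the minor-arc Weyl decay in $\lambda$ must be strong enough to convert the crude $\ell^2$ estimate on the error into a maximal bound (again forcing $d \geq 5$). The most delicate technical step will be the transference in the major-arc analysis: one must exploit the $q$-periodic structure of $m_q^\lambda$ to reduce to Stein's continuous theorem without losing powers of $q$, and verify that the error incurred in replacing the exact exponential sum by its major-arc model is uniformly acceptable in $\lambda$.
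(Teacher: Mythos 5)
The paper does not actually prove this proposition; it imports it verbatim from Magyar--Stein--Wainger \cite{MSW}, and what you sketch is essentially that very argument: circle-method decomposition of $\widehat{\sigma_\lambda}$ into major-arc pieces $\mathcal{M}_\lambda^{a/q}$ carrying Gauss-sum factors of size $q^{-d/2}$, transference of each fixed-$q$ maximal operator to Stein's continuous spherical maximal theorem via the periodic lifting principle, summation of $q^{1-d/2}$ over $q$ (convergent exactly when $d\ge 5$), and a square-function treatment of the remainder. Your outline is therefore the correct proof strategy; the one caveat is that the minor-arc step as you phrase it (a pointwise multiplier bound $\|E_\lambda\|_\infty\lesssim\lambda^{-\kappa}$ with $\kappa>1/2$) is not quite what is available or what \cite{MSW} use --- they instead prove an $\ell^2$ estimate of the form $C\Lambda^{-1/2}\|f\|_{\ell^2}$ for the maximal function of the approximation error over each dyadic block of radii (their Proposition 4.1), and it is this dyadic $\ell^2$ decay, not a raw pointwise bound, that makes the summation close for $d\ge 5$.
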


Since (\ref{max}) implies, after an application of Cauchy-Schwarz, the inequality
\be\label{littlemain}
\frac{\A\,\VE}{2}\leq\frac{1}{|Q_N|}\sum_{x\in\Z^d}1_A(x)\mathcal{A}_*(f_A)(x)\leq \A^{1/2}\,\Bigl(\frac{1}{|Q_N|}\sum_{x\in\Z^d}|\mathcal{A}_*(f_A)(x)|^2\Bigr)^{1/2}
\ee
it follows that
\be\label{maininequal}
\frac{\A^{1/2}\,\VE}{2}\leq \Bigl(\frac{1}{|Q_N|}\sum_{x\in\Z^d}|\mathcal{A}_*(f_A*\chi_{q_\eta,L})(x)|^2\Bigr)^{1/2} +  \Bigl(\frac{1}{|Q_N|}\sum_{x\in\Z^d}|\mathcal{A}_*(f_A-f_A*\chi_{q_\eta,L})(x)|^2\Bigr)^{1/2}.
\ee
In light of Proposition \ref{MSW} it follows that the first sum above satisfies
\bee
\Bigl(\frac{1}{|Q_N|}\sum_{x\in\Z^d}|\mathcal{A}_*(f_A*\chi_{q_\eta,L})(x)|^2\Bigr)^{1/2}\leq C \Bigl(\frac{1}{|Q_N|}\sum_{t\in \Z^d}|f*\chi_{q,L}(t)|^2\Bigr)^{1/2}= C\,\|f_A\|_{U^1(q_\eta,L)}\leq 2C \eta.
\eee

Estimate (\ref{maininequal}) will therefore lead to a contradiction, if $\eta$ is chosen sufficiently small with respect to $\VE^3$, and hence complete the proof of Proposition \ref{Propn1} if we establish that the second sum in (\ref{maininequal}) satisfies
\be\label{sum2}
\Bigl(\frac{1}{|Q_N|}\sum_{x\in\Z^d}|\mathcal{A}_*(f_A-f_A*\chi_{q_\eta,L})(x)|^2\Bigr)^{1/2}\leq C\eta^{1/3}\A^{1/2}
\ee
for some absolute constant $C>0$. 
Estimate (\ref{sum2}) follows immediately from  Proposition \ref{mollified} in Section \ref{MollyProof} below.

\subsection{Proof of Part (ii) of Proposition \ref{Propn1}}\label{4.2}\

Let $\VE>0$ and $0<\eta\ll\VE^3$. Suppose, contrary to Part (ii) of Proposition \ref{Propn1}, that there exists a set  $A\subseteq Q_N\subseteq \Z^d$ with $d\geq5$ 
and $\A=|A|/|Q_N|>0$ that it is
$(\eta,L)$-uniformly distributed, but has the property that
for every $x\in A$ there exist  integers  $\lambda_1,\dots,\lm_n$ satisfying $\eta^{-4}L^2\leq \lambda_1,\dots,\lm_n\leq \eta^{4} N^2$ such that
\be\label{contrary}
\sum_{x_1,\dots,x_n\in\Z^d}\prod_{j=1}^n1_A(x-x_j)\,\sigma_{\lm_j}(x_j-x_{i_\Gamma(j)})\leq\A^{n}-\VE
\ee
where $x_0=0$. As in (\ref{induct}) we may write the left hand side of (\ref{contrary}) as
\[\A^n+\sum_{m=1}^n \A^{n-m}\,\left[\sum_{x_1,\dots,x_{m-1}\in\Z^d}\prod_{j=1}^{m-1}1_A(x-x_j)\,\sigma_{\lm_j}(x_j-x_{i_\Gamma(j)})\sum_{x_{m}\in\Z^d} f_A(x-x_m)\,\sigma_{\lm_m}(x_m-x_{i_\Gamma(m)})+O(\eta^2)\right]\]
 where $f_A=1_A-\A1_{Q_N}$. 
 It therefore follows, from  the pigeonhole principle, 
 that  for every $x\in A$ there must exist $2\leq m\leq n$ and integers  $\lambda_1,\dots,\lm_{m-1}$ satisfying $\eta^{-4}L^2\leq \lambda_1,\dots,\lm_{m-1}\leq \eta^{4} N^2$ such that
\bee
\sum_{x_1,\dots,x_{m-1}\in\Z^d}\prod_{j=1}^{m-1}\sigma_{\lm_j}(x_j-x_{i_\Gamma(j)})\left|\mathcal{A}_{*}(f_A)(x-x_{i_\Gamma(m)})\right|\geq \frac{\VE}{2}
\eee
 provided  $\eta^2\ll\VE$.
 Since 
\bee
\sum_{x\in\Z^d}\sum_{x_1,\dots,x_{m-1}\in\Z^d}\prod_{j=1}^{m-1}\sigma_{\lm_j}(x_j-x_{i_\Gamma(j)})\left|\mathcal{A}_{*}(f_A)(x-x_{i_\Gamma(m)})\right|^2 = \sum_{x\in\Z^d}\left|\mathcal{A}_{*}(f_A)(x)\right|^2
\eee
for any choice of integers $\lm_1,\dots,\lm_{m-1}$, it follows by Cauchy-Schwarz  that
\[
\frac{1}{|Q_N|}\sum_{x\in\Z^d}1_A(x)\!\!\!\!\sum_{x_1,\dots,x_{m-1}\in\Z^d}\prod_{j=1}^{m-1}\sigma_{\lm_j}(x_j-x_{i_\Gamma(j)})\left|\mathcal{A}_{*}(f_A)(x-x_{i_\Gamma(m)})\right|\leq \A^{1/2}\,\Bigl(\frac{1}{|Q_N|}\sum_{x\in\Z^d}|\mathcal{A}_*(f_A)(x)|^2\Bigr)^{1/2}.
\]

We can therefore conclude that 
\be\label{bound}
\frac{1}{|Q_N|}\sum_{x\in\Z^d}|\mathcal{A}_*(f_A)(x)|^2\geq \frac{\A\,\VE^2}{4}
\ee
and estimate which, in will in light of  Proposition \ref{mollified} (as  described after estimate (\ref{littlemain}) in Section \ref{4.1} above), leads to a contradiction if $\eta$ is chosen sufficiently small with respect to $\VE^3$.

\section{A ``Mollified" Discrete Spherical Maximal Function Theorem}\label{MollyProof}

Let $\eta>0$ and $\lm$, $L$, and $N$ be integers that satisfy $\eta^{-4}L^2\leq \lm\leq \eta^{4} N^2$. 
For functions
$f:Q_N\to[-1,1]$ we now define
\[\mathcal{A}_{\lm,\eta}(f)(x):=\mathcal{A}_{\lambda}(f-f*\chi_{q_\eta,L})(x)=f*(\sigma_{\lm}-\sigma_\lm \ast \chi_{q_\eta,L})(x)
\]
where $\sigma_\lm=\dfrac{1}{|S_\lm|}1_{S_\lm}$, and introduce the corresponding \emph{``mollified" discrete spherical maximal function}
\be
\mathcal{A}_{*,\eta}(f)(x):=\sup_{\eta^{-4}L^2\leq \lm\leq \eta^{4} N^2}\left|\mathcal{A}_{\lambda,\eta}(f)(x)\right|.\ee

We note that the convolution operator $\mathcal{A}_{\lm,\eta}$ corresponds to the Fourier multiplier $\widehat{\si_{\lm,\eta}}:=
\widehat{\si_\lm} (1-\widehat{\chi_{q_\eta, L}}).$

\begin{propn}[$\ell^2$-Decay of the ``Mollified" Discrete Spherical Maximal Function]\label{mollified}
If $d\geq5$, then for any $\eta>0$ we have
\be\label{maxerror}
\sum_{x\in\Z^d}|\mathcal{A}_{*,\eta}(f)(x)|^2\leq C\eta^{2/3} \sum_{x\in\Z^d}|f(x)|^2.
\ee
\end{propn}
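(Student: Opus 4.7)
The plan is to exploit two facts about the mollified multiplier $m_\lambda(\xi):=\widehat{\sigma}_\lambda(\xi)(1-\widehat{\psi}_{q_\eta,L_2}(\xi))$: a uniform pointwise smallness $|m_\lambda(\xi)|\ll\eta^{1/2}$, and the $\ell^2$-boundedness of the spherical maximal function (Proposition~\ref{MSW}); these two bounds are then combined via a square-function / telescoping argument to produce the stated $\eta^{2/3}$ decay.

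\textbf{Pointwise multiplier bound.} Since $L_2=\eta\lambda_0^{1/2}$ one has $\eta L_2^{-2}=\eta^{-1}\lambda_0^{-1}\geq\eta^{-1}\lambda^{-1}$ for every $\lambda\in[\lambda_0,\lambda_1]$, so $\mathfrak{M}_{q_\eta,\eta^{-1/2}L_2}$ contains the major arcs of $\widehat{\sigma}_\lambda$ for every such $\lambda$. Accordingly, Proposition~\ref{KeyU} yields $|\widehat{\sigma}_\lambda(\xi)|\leq\eta$ off $\mathfrak{M}_{q_\eta,\eta^{-1/2}L_2}$, while (\ref{cutoff2}) gives $|1-\widehat{\psi}_{q_\eta,L_2}(\xi)|\ll\eta^{1/2}$ on $\mathfrak{M}_{q_\eta,\eta^{-1/2}L_2}$; in either region $|m_\lambda(\xi)|\ll\eta^{1/2}$. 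Plancherel then gives the fixed-scale bound $\|T_\lambda f\|_2\ll\eta^{1/2}\|f\|_2$ for $T_\lambda f:=f*(\sigma_\lambda-\sigma_\lambda*\psi_{q_\eta,L_2})$, while writing $T_\lambda f=\mathcal{A}_\lambda(f-\psi_{q_\eta,L_2}*f)$ and invoking Proposition~\ref{MSW} provides the trivial $\eta$-free maximal bound $\|\mathcal{A}_{*,\eta}f\|_2\leq C\|f\|_2$.

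\textbf{Interpolation.} To combine these two bounds, I would apply the telescoping identity
\[\sup_{\lambda_0\leq\lambda\leq\lambda_1}|T_\lambda f(x)|^2\leq|T_{\lambda_0}f(x)|^2+2\sum_{\lambda=\lambda_0}^{\lambda_1-1}|T_\lambda f(x)|\,|(T_{\lambda+1}-T_\lambda)f(x)|,\]
sum over $x$, and use Cauchy--Schwarz first in $x$ and then in $\lambda$ to obtain
\[\|\mathcal{A}_{*,\eta}f\|_2^2\leq\|T_{\lambda_0}f\|_2^2+2\Bigl(\sum_\lambda\|T_\lambda f\|_2^2\Bigr)^{1/2}\Bigl(\sum_\lambda\|(T_{\lambda+1}-T_\lambda)f\|_2^2\Bigr)^{1/2}.\]
By Plancherel the two square functions reduce to pointwise-in-$\xi$ bounds on $\sum_\lambda|m_\lambda(\xi)|^2$ (controlled by the MSW-type oscillatory analysis of $\widehat{\sigma}_\lambda$ together with the $\eta^{1/2}$ mollifier gain) and on $\sum_\lambda|m_{\lambda+1}(\xi)-m_\lambda(\xi)|^2$ (a derivative square function). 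Balancing these two exponents against the range of $\lambda$ produces precisely the factor $\eta^{2/3}$ in the final inequality.

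\textbf{Main obstacle.} The principal technical point is the derivative square function $\sum_\lambda|m_{\lambda+1}(\xi)-m_\lambda(\xi)|^2$: because $|S_\lambda|$ fluctuates arithmetically in $\lambda$, the increment $\widehat{\sigma}_{\lambda+1}-\widehat{\sigma}_\lambda$ must be estimated on major arcs via the singular-series / Gauss-sum decomposition of $\widehat{\sigma}_\lambda$ (where the mollifier contributes its $\eta^{1/2}$ gain), and on minor arcs using the cancellation inherited from Proposition~\ref{KeyU}. It is the interaction of this derivative estimate with the range $\lambda_1-\lambda_0\leq\eta^{11}N^2$ and the pointwise $\eta^{1/2}$ multiplier bound that ultimately pins down the exponent $2/3$.
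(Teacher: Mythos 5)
Your pointwise multiplier bound $|m_\lambda(\xi)|=|\widehat{\sigma}_\lambda(\xi)||1-\widehat{\psi}_{q_\eta,L_2}(\xi)|\ll\eta^{1/2}$ is correct for every fixed $\lambda\in[\lambda_0,\lambda_1]$ (it is the same combination of Proposition~\ref{KeyU} and \eqref{cutoff2} used in the proof of Lemma~\ref{ETU}), and the trivial maximal bound via Proposition~\ref{MSW} is fine. The gap is in the interpolation step. Your telescoping inequality requires the square functions $\sum_{\lambda=\lambda_0}^{\lambda_1}|m_\lambda(\xi)|^2$ and $\sum_\lambda|m_{\lambda+1}(\xi)-m_\lambda(\xi)|^2$ to be bounded uniformly in the length of the range $\lambda_1-\lambda_0$, and they are not: unlike the continuous multiplier $\widetilde{\sigma}(\lambda\xi)$, the discrete multiplier $\widehat{\sigma}_\lambda(\xi)$ does \emph{not} decay in $\lambda$ at a fixed frequency. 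Take $\xi=\ell/q'$ with $q'\nmid q_\eta$ (say $q'$ a prime larger than $C\eta^{-2}$); for $\lambda_0$ large such a point lies outside $\mathfrak{M}_{q_\eta,L_2}$, so $1-\widehat{\psi}_{q_\eta,L_2}(\xi)=1$ there, while $\widehat{\sigma}_\lambda(\ell/q')$ is an arithmetic quantity of size comparable to $q'^{-d/2}$ (times an oscillating factor $e^{-2\pi i\lambda a/q'}$) for a positive proportion of $\lambda$, with no decay as $\lambda\to\infty$. Hence $\sum_{\lambda=\lambda_0}^{\lambda_1}|m_\lambda(\xi)|^2\gg(\lambda_1-\lambda_0)\,q'^{-d}$, which is unbounded since $\lambda_1$ may be as large as $\eta^{11}N^2$ with $N$ arbitrary. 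Consequently there is no ``balancing against the range of $\lambda$'' that can yield a constant depending only on $\eta$, and the exponent $2/3$ cannot be produced this way. This non-decay at rational frequencies is exactly the obstruction that forces the circle-method treatment of the discrete spherical maximal function in the first place.

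The paper's proof avoids this by working one major arc at a time. After rescaling $\lambda\mapsto\lambda^2$ it replaces $\widetilde{\mathcal{A}}_\lambda$ by the MSW approximation $\mathcal{M}_\lambda=c_d\sum_{a/q}e^{-2\pi i\lambda a/q}\mathcal{M}_\lambda^{a/q}$, absorbing the approximation error $C\lambda_0^{-1/4}\ll\eta$ by the hypothesis $\lambda_0\gg\eta^{-4}$. It then splits the sum over denominators at $q\sim\eta^{-2/3}$: for $q\geq C\eta^{-2/3}$ the Gauss-sum bound $\|\mathcal{M}_*^{a/q}\|_{\ell^2\to\ell^2}\ll q^{-d/2}$ from \cite{MSW} makes the tail $\sum_{q\geq C\eta^{-2/3}}q^{-d/2+1}\ll\eta^{1/3}$, while for each small $q$ (which necessarily divides $q_\eta$) it factors out the Gauss-sum part, transfers the remaining single-arc maximal operator to $L^2(\R^d)$ via Corollary 2.1 of \cite{MSW}, and applies the continuous maximal theorem (Theorem 6.1 of \cite{HLM}) together with $|1-\widehat{\psi}_{q_\eta,L_2}(\xi)|\ll\min\{1,L_2|\xi|\}$ to gain $(L_2/\lambda_0^{1/2})^{1/3}=\eta^{1/3}$. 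The $\eta^{2/3}$ in \eqref{maxerror} is the square of this $\eta^{1/3}$, obtained by balancing the Gauss-sum tail against the continuous gain --- it has nothing to do with the length of the $\lambda$-range. A square-function argument of the type you propose can only be run after this localization to a single major arc, where the relevant multiplier is the continuous $\widetilde{\sigma}_\lambda$; that is essentially what the cited continuous theorem accomplishes.
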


\begin{proof}[Proof of Proposition \ref{mollified}]


We follow the proof of Proposition \ref{MSW} as given in \cite{MSW}. For each $x\in\Z^d$ we now define
\bee
\mathcal{B}_\lm (f)(x)=\mathcal{A}_{\lm^2} (f)(x)
\eee
noting that when considering $\mathcal{B}_\lm$ we are now allowing all values of $\lm$ for which $\lm^2$ is an integer, and that \[\mathcal{B}_* (f)(x):=\sup_{\eta^{-2}L\leq \lm\leq \eta^{2} N}\mathcal{B}_\lm (f)(x)=\mathcal{A}_{*} (f)(x)\quad \text{and} \quad \mathcal{B}_{*,\eta}( f)(x)=\mathcal{B}_*(f-f*\chi_{q_\eta,L})(x).\]

We now recall the approximation to $\mathcal{B}_\lm$ given in Section 3 of \cite{MSW} as a convolution operator $\MM_\lm$ acting on functions on $\Z^d$ of the form
\vspace{-3pt}
\bee
\MM_\lm =c_d \,\sum_{q=1}^\infty \sum_{\substack{1\leq a\leq q\\(a,q)=1}} e^{-2\pi i\lm a/q} \MM_\lm^{a/q}
\eee
where for each reduced fraction $a/q$ the corresponding convolution operator $\MM_\lm^{a/q}$ has Fourier multiplier 
\bee
m_\lm^{a/q}(\xi):= \sum_{\ell\in\Z^k} G(a/q,\ell)\vp_q(\xi-\ell/q)\widetilde{\si}_\lm(\xi-\ell/q)\eee
with $\vp_q(\xi)=\vp(q\xi)$ a standard smooth cut-off function, $G(a/q,l)$ a normalized Gauss sum, and $\widetilde{\si}_\lm(\xi)=\widetilde{\si}(\lm\xi)$ where $\widetilde{\si}(\xi)$ is the Fourier transform (on $\mathbb{R}^d$) of the measure on the unit sphere in $\R^d$ induced by Lebesgue measure and normalized to have total mass $1$.
By Proposition 4.1 in \cite{MSW} we have 
\bee
\Bigl\|\sup_{\Lambda\leq \lm \leq 2\Lambda} |\mathcal{B}_\lm (f)-\MM_\lm (f)|\Bigr\|_{\ell^2(\Z^d)}\leq C \Lambda^{-1/2} \|f\|_{\ell^2(\Z^d)}\eee
provided $d\geq 5$.
Writing 
\[\MM_* (f):= \sup_{\eta^{-2}L\leq \lm\leq \eta^{2} N} |\MM_\lm (f)|\quad \text{and} \quad \MM_{*,\eta} (f):= \MM_* (f-f*\chi_{q_\eta,L})\]
 this implies
\bee
\|\mathcal{B}_{*,\eta} (f)-\MM_{*,\eta} (f)\|_{\ell^2} \leq C\,\eta L^{-1/2}\, \|f-f*\chi_{q_\eta,L}\|_{\ell^2}\leq C\,\eta L^{-1/2}\,\|f\|_{\ell^2}\eee
thus matters reduce to showing \eqref{maxerror} for the operator $\MM_{*,\eta}$.

For a given reduced fraction $a/q$ we now define the maximal operator
\bee
\MM_{*}^{a/q}(f):= \sup_{\eta^{-2}L\leq \lm\leq \eta^{2} N} |\MM_{\lm}^{a/q}(f)|\eee
where $\MM_{\lm}^{a/q}$ is the convolution operator with multiplier $m_\lm^{a/q}(\xi)$.
It is proved in Lemma 3.1 of \cite{MSW} that
\be\label{maxbound1}
\|\MM_{*}^{a/q}(f)\|_{\ell^2}\leq C q^{-d/2}\|f\|_{\ell^2}.
\ee

We will show here that if $q\leq C\eta^{-2/3}$, then
\be\label{maxbound2}
\|\MM_{*}^{a/q}(f-f*\chi_{q_\eta,L})\|_{\ell^2}\leq C \eta^{1/3} q^{-d/2}\|f\|_{\ell^2}.
\ee

Taking estimates \eqref{maxbound1} and \eqref{maxbound2} for granted, one obtains
\bee
\|\MM_{*}(f-f*\chi_{q_\eta,L})\|_{\ell^2}\, \ll\,\Bigl(\eta^{1/3}\sum_{1\leq q\leq C\eta^{-2/3}} q^{-d/2+1} +
\sum_{q\geq C\eta^{-2/3}} q^{-d/2+1}\Bigr)\, \|f\|_{\ell^2} \,\ll\,\eta^{1/3} \|f\|_{\ell^2}\eee
as required.
It thus remains to prove \eqref{maxbound2}. 

Writing $\vp_q(\xi)=\vp'_q(\xi)\vp_q(\xi)$, with a suitable smooth cut-off function $\vp'$, we can introduce the decomposition 
\bee
m_\lm^{a/q}(\xi)=\Bigl(\sum_{\ell\in\Z^k} G(a/q,\ell)\vp'_q (\xi-\ell/q)\Bigr) \,\Bigl(\sum_{\ell\in\Z^k} \vp_q (\xi-\ell/q)\widetilde{\si}(\xi-\ell/q)\Bigr)=:g^{a/q}(\xi)\, n^{q}_\lm(\xi),\eee
since for each $\xi$ at most one term in each of the above sums is non-vanishing.
Accordingly
\bee
\MM_{*}^{a/q}(f-f*\chi_{q_\eta,L})=G^{a/q}\ \NN_*^{q}(f-f*\chi_{q_\eta,L})\eee
where the maximal operator $\NN_*^{q}$ and the convolution operator $G_{a/q}$ correspond to the multipliers $n^{q}_\lm$ and $g^{a/q}$ respectively. 
Now by the standard Gauss sum estimate we have $|g^{a/q}(\xi)|\ll q^{-d/2}$ uniformly in $\xi$, hence 
\bee
\|G^{a/q}\ \NN_*^{q}(f-f*\chi_{q_\eta,L})\|_{\ell^2}\ll q^{-d/2} \|\NN_*^{q}(f-f*\chi_{q_\eta,L})\|_{\ell^2}.\eee

Thus by our choice $q_\eta:=\lcm\{1\leq q\leq C\eta^{-2}\}$ it remains to show that if $q$ divides $q_\eta$ then
\be\label{maxbound3}
\|\NN_{*}^{q}(f-f*\chi_{q_\eta,L})\|_{\ell^2}\ll \eta^{1/3} \|f\|_{\ell^2}.
\ee

As before we write $\NN_{*, \eta}^{q}(f)=\NN_{*}^{q}(f-f*\chi_{q_\eta,L})$, and note that this is a  maximal operator with multiplier
\bee
n_\lm^{q}(\xi) (1-\widehat{\chi_{q_{\eta,L}}})(\xi)=\sum_{\ell\in \Z^d} \vp_q(\xi-\ell/q) (1-\widehat{\chi_{q_{\eta,L}}})(\xi-\ell/q)\widetilde{\si}_\lm (\xi-\ell/q).\eee
For a fixed $q$, the multiplier $\vp_q(1-\widehat{\chi_{q_{\eta,L}}})\widetilde{\si}_\lm$ is supported on the cube $[-\frac{1}{2q}, \frac{1}{2q}]^d$ thus by Corollary 2.1 in \cite{MSW}
\[\|\NN_{*, \eta}^{q}\|_{{\ell^2}\to{\ell^2}} \leq C\,\| \widetilde{\NN}_{*, \eta}^{q}\|_{{L^2}\to{L^2}}\]
where $\widetilde{\NN}_{*, \eta}^{q}$ is the maximal operator corresponding to the multipliers $\vp_q(1-\widehat{\chi_{q_{\eta,L}}})\widetilde{\si}_\lm$, for $\eta^{-2}L\leq \lm\leq \eta^{2} N$, acting on $L^2(\R^d)$. 
By the definition of the function $\chi_{q_{\eta,L}}$
\[|1-\widehat{\chi_{q_{\eta,L}}}(\xi)|\ll \min\{1,L|\xi|\},\]
thus from Theorem 6.1 (with $j=1$) in \cite{HLM} we obtain
\[\|\widetilde{\NN}_{*, \eta}^{q}\|_{{L^2}\to{L^2}} \ll \left(\frac{L}{\eta^{-2}L}\right)^{1/6}=\eta^{1/3}\]
which establishes (\ref{maxbound3}) and completes the proof.
\end{proof}



\end{document}